\newtheorem{theorem}{Theorem}[section]
\newtheorem{bigthm}{Theorem}
\newtheorem{corollary}[theorem]{Corollary}
\newtheorem{proposition}[theorem]{Proposition}
\newtheorem{lemma}[theorem]{Lemma}
\newtheorem{definition}[theorem]{Definition}
\theoremstyle{remark}
\newtheorem{remark}[theorem]{Remark}
\DeclareMathOperator{\rank}{\mathrm{rank}}
\DeclareMathOperator{\im}{\mathrm{im}}
\newcommand{\qbin}[2]{\begin{bmatrix}{#1}\\ {#2}\end{bmatrix}_q}
\newcommand{\Fq}{\mathbb{F}_q}
\newcommand{\Z}{\mathbb{Z}}
\newcommand\Fqn[1]{\mathbb{F}_q^{#1}}
\newcommand{\nul}{\mathrm{nul}}
\newcommand{\bP}{\mathbb{P}}
\newcommand{\ra}{\rightarrow}
\newcommand{\lra}{\longrightarrow}
\newcommand{\st}{\,|\,} 
\title{A Model for Random Chain Complexes}
\author{Michael J. Catanzaro and Matthew J. Zabka}
\date{\today}
\begin{document}
\begin{abstract}
We introduce a model for random chain complexes over a finite
field. The randomness in our complex comes from choosing the entries in the
matrices that represent the boundary maps uniformly over $\mathbb{F}_q$,
conditioned on ensuring that the composition of consecutive boundary maps is
the zero map.  We then investigate the combinatorial and homological 
properties of this random chain complex.
\end{abstract}
\maketitle

\section{Introduction}
There have been a variety of attempts to randomize topological constructions.
Most famously, Erd\"os and R\'enyi introduced a model for random
graphs~\cite{erdos_random_1959}.  This work spawned an
entire industry of probabilistic models and tools used for understanding other
random topological and algebraic phenomenon. These include various models for
random simplicial complexes, random networks, and many
more~\cite{erdos_evolution_1960, linial_random_2017}. Further, this has
led to beautiful connections with statistical physics, for example through
percolation theory~\cite{bollobas_2006_percolation, broadbent_percolation_1957,
kesten_percolation_1982}.  Our ultimate goal is to understand higher
dimensional topological constructions arising in algebraic topology from a
random perspective. In this manuscript, we begin to address this goal with the
much simpler objective of understanding an algebraic construction commonly associated
with topological spaces, known as a chain complex. 

Chain complexes are used to measure a
variety of different algebraic, geoemtric, and topological properties Their usefulness lies in
providing a pathway for homological algebra computations. They arise in a
variety of contexts, including commutative algebra, algebraic geometry, group
cohomology, Hoschild homology, de Rham cohomology, and of course algebraic
topology~\cite{bott2013differential, brown2012cohomology,
hartshorne2013algebraic, hatcher2002algebraic, hochschild1945cohomology}.
Specifically, chain complexes measure the relationship between cycles and
boundaries of a topological space. This relationship uncovers many topological
properties of interest, and is precisely what homology reveals. Furthermore,
the singular chain complex of a topological space provides a canonical method
of associating a chain complex to a topological space.

%

Let $R$ be a ring. A {\em chain complex $C_*=(C_m, \delta_m)$ with coefficients in
$R$} is a sequence of $R$-modules, denoted $C_m$, together with a sequence of
linear transformations 
\[
  \cdots \xrightarrow{\delta_{m+1}} C_m \xrightarrow{\delta_m}
  C_{m-1} \xrightarrow{\delta_{m-1}} \cdots
\]
such that $\delta_{m-1}\delta_m = 0$ for all $m \in \Z$.  The maps $\delta_m$
are called the boundary maps of the chain complex, and the equation
$\delta_{m-1} \delta_m = 0$ is known as the boundary condition;
see~\cite{cartan2016homological} for further details. 

The boundary condition $\delta_{m-1}\delta_m=0$ forces $\im \delta_m \subseteq \ker \delta_{m-1}$.
The {\em homology} of a chain complex measures the deviation of this containment
from equality:
\begin{equation*}
  \label{eqn:hom}
  H_m(C_*;R) = \frac{\ker \delta_{m-1}}{\im \delta_m} \, .
\end{equation*}
When the chain complex arises by taking singular chains on a topological
space, homology can be a very powerful tool in algebraic topology~\cite{hatcher2002algebraic}.

We work over the field with $q$-elements $R = \Fqn{}$ and consider the chain
complex whose $R$-modules are given by finite dimensional vector spaces,  $C_m
= \Fqn{n_m}$, where each $n_m \in \mathbb{N}$. After fixing the standard
basis for $\Fqn{}$, the boundary
maps can be regarded as $n_{m-1}\times n_m $ matrices, which we denote by
$A_m$. Homology can then be understood in terms of dimension
\begin{equation*}
  \beta_m := \dim_{\Fqn{}} \frac{\ker A_{m-1}}{\im A_m} \, ,
  \label{eqn:betti_numbers}
\end{equation*}
where $\beta_m$ is known as the $m^\mathrm{th}$ {\em Betti number}.

%


\subsection*{Main Results}
Let $q$ be a prime number.  We
build a random chain complex with coefficients in $\Fqn{}$ as follows (see
Definition~\ref{defn:random_chain_cx} for a precise statement). Given a
sequence of non-negative integers $\{n_m\}$, where $m \in \Z$, together with a probability
distribution $\varphi$ on $\Fqn{}$, we construct random
linear transformations 
\[
  A_m : \Fqn{n_m} \lra \Fqn{n_{m-1}} \, ,
\]
for all $m$. The transformations are subject to the constraint $A_{m-1} A_m =
0$, and should be chosen according to $\varphi$. The latter means the
following: After fixing the standard basis for $\Fqn{n_m}$, it suffices to
construct random $n_{m-1} \times n_m$ matrices $A_m$, satisfying $A_{m-1}A_m =
0$. We do so by choosing matrix entries i.i.d. from the distribution $\varphi$
on $\Fqn{}$. We then say that $(\Fqn{n_m}, A_m, \varphi)$ is a {\em random
chain complex}. 

We are primarily interested in the case when $\varphi$ is the discrete uniform
distribution on $\Fqn{}$. In this case, we drop $\varphi$ from the notation and
say that $(\Fqn{n_m}, A_m)$ is a {\em uniform random chain complex}. We also
restrict our attention to bounded below chain complexes (see Remark~\ref{rem:bdd}).

Our first result is an explicit formula for the distribution
of the Betti numbers.
\begin{bigthm} 
  \label{thm:bettinum}
  Let $\beta_m$ be the $m$-th Betti number of a uniform random chain complex
  $(\Fq^{n_m} , A_m)$. Then
  \[    
    \bP[\beta_m = b] = \sum_{i_m=0}^{n_{m}} P_{i_m}^{m}(i_m-b)
    \sum_{i_{m-1}=0}^{n_{m-1}} P_{i_{m-1}}^{m-1}\left(n_{m} -i_m\right) \cdots
    \sum_{i_1 = 0}^{n_1} P_{i_1}^1\left(n_2 - i_2\right) P_{n_0}^0 \left(n_1 - i_1\right) \, ,
  \]
  where $P^m_k(r)$ is given in Eq.~\eqref{eqn:Pmkr}.
\end{bigthm}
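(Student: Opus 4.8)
\noindent\emph{Proof proposal.} The plan is to track only the kernel dimensions $D_j := \dim\ker A_j$ of the boundary maps, to recognize $(D_j)_j$ as an inhomogeneous Markov chain built from the bottom of the complex upward, and then to read off $\bP[\beta_m = b]$ by the law of total probability along this chain. Because the complex is bounded below it has a definite bottom: $A_0$ has zero target, so $A_0 = 0$ and $D_0 = n_0$. Unwinding Definition~\ref{defn:random_chain_cx}, the matrices are produced one at a time starting from the bottom, with $A_1$ a uniform random $n_0\times n_1$ matrix and, once $A_1,\dots,A_{j-1}$ are fixed, $A_j$ uniform over $\{A : A_{j-1}A = 0\}$; equivalently, the columns of $A_j$ are independent and uniform in $\ker A_{j-1}$.

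The first step is to establish the Markov property and compute the transitions. The conditional law of $A_j$ given $A_1,\dots,A_{j-1}$ depends on the past only through the subspace $\ker A_{j-1}$: after choosing a basis of $\ker A_{j-1}$ (a space of dimension $D_{j-1}$), a uniform element of the set of $A$ whose columns lie in $\ker A_{j-1}$ is precisely a uniform random $D_{j-1}\times n_j$ matrix over $\Fq$, and this identification preserves rank, hence preserves $D_j = n_j - \rank A_j$ in law. Thus $(D_j)$ is Markov with
\[
  \bP[D_j = i_j \mid D_{j-1} = i_{j-1}] = \bP[\text{a uniform } i_{j-1}\times n_j \text{ matrix over } \Fq \text{ has rank } n_j - i_j] = P^{j-1}_{i_{j-1}}(n_j - i_j),
\]
where the last equality is the classical count of $\Fq$-matrices of a prescribed rank (Gaussian binomial coefficients), i.e.\ Eq.~\eqref{eqn:Pmkr}; in particular, with $D_0 = n_0$, the initial law is $\bP[D_1 = i_1] = P^0_{n_0}(n_1 - i_1)$.

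Next, the Betti number reads off two consecutive stages of the chain: by the boundary condition $A_m A_{m+1} = 0$ we have $\im A_{m+1}\subseteq\ker A_m$, so $\beta_m = \dim\ker A_m - \dim\im A_{m+1} = D_m - \rank A_{m+1}$. Conditioning on $D_m$, the reduction above shows $\rank A_{m+1}$ has the law of the rank of a uniform $D_m\times n_{m+1}$ matrix, hence $\bP[\beta_m = b \mid D_m = i_m] = \bP[\rank A_{m+1} = i_m - b \mid D_m = i_m] = P^m_{i_m}(i_m - b)$. The law of total probability then gives $\bP[\beta_m = b] = \sum_{i_m} P^m_{i_m}(i_m - b)\,\bP[D_m = i_m]$, and expanding $\bP[D_m = i_m]$ by iterating the Chapman--Kolmogorov relation over the intermediate states $D_{m-1} = i_{m-1},\dots,D_1 = i_1$ yields
\[
  \bP[\beta_m = b] = \sum_{i_m} P^m_{i_m}(i_m - b)\sum_{i_{m-1}} P^{m-1}_{i_{m-1}}(n_m - i_m)\cdots\sum_{i_1} P^1_{i_1}(n_2 - i_2)\,P^0_{n_0}(n_1 - i_1),
\]
which is the claimed identity, the ranges $0\le i_j\le n_j$ being exactly the supports of the transition laws.

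The load-bearing part is the Markov reduction, and it is more a matter of care than of depth. One must verify, against the precise model, that the conditioned ensemble is generated by the bottom-up sampling above; that the conditional law of $A_j$ given the lower complex is uniform on $\{A : A_{j-1}A = 0\}$ and factors through $\ker A_{j-1}$; and that the linear isomorphism between $\{A : \text{columns of } A \subseteq \ker A_{j-1}\}$ and all $D_{j-1}\times n_j$ matrices transports $\rank$ correctly, so that $D_j$ depends on the past only through $D_{j-1}$. Once this is in place, together with the (standard) rank distribution of a uniform random matrix over $\Fq$, what remains is the index bookkeeping that matches the telescoping sum to the displayed formula.
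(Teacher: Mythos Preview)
Your proof is correct and follows essentially the same route as the paper: condition on $\nul(A_m)$, then expand that distribution by repeatedly conditioning on $\nul(A_{m-1}),\dots,\nul(A_1)$ using the transition probabilities $P^{j-1}_{i_{j-1}}(n_j-i_j)$. The only difference is packaging: you phrase the recursion as the Chapman--Kolmogorov identity for the Markov chain $(D_j)$, while the paper isolates the recursive step as a separate inductive statement (Theorem~\ref{thmProbranks}) and then feeds it into the law of total probability. Your inline justification that the conditional law of $A_j$ depends only on $\dim\ker A_{j-1}$ via a change of basis is exactly the content of Lemma~\ref{lemPkmr}.
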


As Theorem~\ref{thm:bettinum} gives a formula for
computing the distribution of the Betti numbers, it also leads to formulas for
other probabilistic properties of $\beta_m$, such as its moments and variance.

Our second main result show that, asymptotically, the $m$-th Betti number of a uniform random chain complex concentrates in a single value. Set
\begin{equation*}
  (n)_+ = \max(0,n) \, , 
\end{equation*}
to be the {\em positive part} of $n$.
Define
\begin{equation}
  B_m = (-n_{m+1} + (n_m - (n_{m-1} - (\cdots - (n_1 - n_0)_+ \cdots)_+ )_+
  )_+)_+ \, \, .
  \label{eqn:Nm}
\end{equation}

\begin{bigthm}
  \label{thm:qtoinfty}
  For a uniform random chain complex $(\Fqn{n_m},A_m)$ with $B_m$ defined as in Eq.~\eqref{eqn:Nm},
  \[
    \bP[\beta_m = B_m] \ra 1 
    \mbox{ as } q \ra \infty  \, .
  \]
\end{bigthm}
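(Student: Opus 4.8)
The plan is to prove the theorem by passing to the limit $q\to\infty$ directly inside the exact distribution given by Theorem~\ref{thm:bettinum}. The guiding picture is that, as $q$ grows, a uniform random matrix over $\Fqn{}$ is overwhelmingly likely to have the largest rank its shape allows; in the construction of the complex this means each boundary map $A_j\colon\Fqn{n_j}\to\ker A_{j-1}$ should, with probability tending to $1$, have rank $\min(n_j,\dim\ker A_{j-1})$, and threading this through all levels pins every rank — and hence $\beta_m$ — to a single deterministic value, which one then identifies with $B_m$. Concretely, in the formula of Theorem~\ref{thm:bettinum} the index $i_j$ records the nullity $\dim\ker A_j$, and $P^m_k(r)$ of Eq.~\eqref{eqn:Pmkr} is (conditionally on $\dim\ker A_m=k$) the probability that the next boundary map has rank $r$; equivalently, the probability that $n_{m+1}$ i.i.d.\ uniform vectors in a $k$-dimensional $\Fqn{}$-space span an $r$-dimensional subspace. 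So passing to the limit in Theorem~\ref{thm:bettinum} is just a bookkeeping version of the direct rank computation sketched above.

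The one estimate I would establish is that, for fixed nonnegative integers $k,n,r$,
\[
  \bP\big[\text{a uniform random } k\times n \text{ matrix over }\Fqn{}\text{ has rank }r\big]\ \longrightarrow\ \mathbb{1}\big[\,r=\min(k,n)\,\big]\qquad\text{as } q\to\infty .
\]
This follows from the standard count of rank-$r$ matrices over a finite field: when $r=\min(k,n)$ the probability is a finite product of factors of the form $1-q^{-j}$, each tending to $1$, whereas for $r<\min(k,n)$ it carries an overall factor $q^{-(k-r)(n-r)}$ with $(k-r)(n-r)\ge 1$ and therefore tends to $0$. Applied to each $P^m_k(r)$ this yields $P^m_k(r)\to\mathbb{1}[\,r=\min(k,n_{m+1})\,]$ as $q\to\infty$.

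Next I would feed this into Theorem~\ref{thm:bettinum}. Since we restrict to bounded-below complexes (Remark~\ref{rem:bdd}), only finitely many of the $n_j$ with $j\le m+1$ are nonzero, so the stated distribution is a \emph{finite} nested sum and the limit moves inside every summation. In the limit the indicators collapse the sum to a single term: $P^0_{n_0}(n_1-i_1)$ forces $n_1-i_1=\min(n_0,n_1)$, i.e.\ $i_1=(n_1-n_0)_+$; then $P^1_{i_1}(n_2-i_2)$ forces $i_2=(n_2-i_1)_+$, and inductively $i_j=(n_j-i_{j-1})_+$ for $2\le j\le m$; finally $P^m_{i_m}(i_m-b)$ forces $i_m-b=\min(i_m,n_{m+1})$, i.e.\ $b=(i_m-n_{m+1})_+$. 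Unwinding these iterated positive parts identifies the surviving index $b$ with the expression \eqref{eqn:Nm} for $B_m$, so $\bP[\beta_m=b]\to\mathbb{1}[\,b=B_m\,]$, and in particular $\bP[\beta_m=B_m]\to1$.

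The main obstacle I anticipate is the displayed rank estimate — a clean uniform-in-$q$ bound on the $q$-binomial-type products that make up $P^m_k(r)$; once that is in hand, the remaining steps are routine, namely the (legitimate, because finite) interchange of limit and sum and the bookkeeping matching the collapsed recursion $i_1=(n_1-n_0)_+$, $i_j=(n_j-i_{j-1})_+$, $b=(i_m-n_{m+1})_+$ against \eqref{eqn:Nm}. It is worth recording that boundedness below is exactly what keeps this sum finite and so makes the term-by-term passage to the limit valid; without it one would need an additional tail estimate controlling the high-degree part of the complex.
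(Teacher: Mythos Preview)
Your proposal is correct and follows essentially the same route as the paper: you re-derive the content of Lemma~\ref{lem:Pqtoinfty} (the limit $P^m_k(r)\to\mathbb{1}[r=\min(k,n_{m+1})]$), plug it into the finite nested sum of Theorem~\ref{thm:bettinum}, and collapse to the unique surviving index sequence $i_j=(n_j-i_{j-1})_+$, $b=(i_m-n_{m+1})_+=B_m$---exactly the content of Proposition~\ref{prop:oneseq} and the paper's proof of Theorem~\ref{thm:qtoinfty}. One small wording point: the reason the sum in Theorem~\ref{thm:bettinum} is finite is not that finitely many $n_j$ vanish, but simply that the formula terminates at level~$0$ (there are only $m$ nested sums); your termwise passage to the limit is nonetheless valid for precisely that reason.
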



\begin{remark}
  \label{rem:monotone}
  As a special case of the above theorem, consider when $\{n_m\}$
  is constant or increasing. In this case, 
  $B_m = 0$, and the homology is trivial in probability as $q \ra \infty$ 
  (see Corollary~\ref{cor:inc}). 
\end{remark}

\subsection*{Related Work} Others have considered different methods of applying
randomness to chain complexes. In~\cite{ginzburg2017random}, Ginzburg and
Pasechnik investigate a different notion of a random chain complex than the one
we have described above.  Given a finite dimensional vector space $V$ over
$\Fqn{}$, they consider chain complexes of the form \[ \cdots
\stackrel{D}{\lra} V \stackrel{D}{\lra} V \stackrel{D}{\lra} \cdots \, \, , \]
for a randomly chosen linear operator $D$ such that $D^2 = 0$. They choose the
operator $D$ uniformly over all such possible choices. In particular, our
construction is distinct from theirs, since they use the same operator $D$ at
each stage of the complex.
The first of their main results~\cite[Thm 2.1]{ginzburg2017random} states that
the rank of homology concentrates in the lowest possible dimension as $q \ra \infty$.  
In the special case when $n_m \equiv n$ is constant, we also obtain minimal rank
homology (see Remark~\ref{rem:monotone}).


The second author has introduced and studied the
properties of a random Bockstein operation~\cite{zabka2018random}. 
In homological algebra,
the Bockstein is a connecting homomorphism associated with a short exact
sequence of abelian groups, which are then used as the coefficients in a chain
complex. Given a random boundary operator of a chain complex, the distribution of compatible
random Bockstein operations is given in~\cite[Thm 5.2]{zabka2018random}.

\subsection*{Outline}
  The paper is organized as follows. In Section 2, we discuss preliminary results useful for 
  the combinatorial aspects of our results. We give a precise definition of a model for a 
  random chain complex in Section 3, as well as prove
  Theorem~\ref{thm:bettinum}. In Section 4, we complete the proof of Theorem~\ref{thm:qtoinfty}.


  \subsection*{Acknowledgments} The first author would like to thank Peter Bubenik for helpful discussions.

\section{Preliminaries}
This section consists of lemmas that are necessary to prove our main results.
The first four lemmas count the number of elements in various sets related to
finite vector spaces over $\mathbb{F}_q$.  We provide
proofs for these lemmas, but an interested reader can also
see~\cite{stanley2011enumerative} for further details. The last lemma of 
this section, Lemma~\ref{lem:Pqtoinfty}, gives the asymptotic behavior of a useful
conditional probability and will be used several times in the remainder of the
paper.

\begin{lemma}\label{NumkTup}
  The number of ordered, linearly independent $k$-tuples of vectors in $\Fqn{n}$  is
\[
\prod_{j=0}^{k-1} \left(q^n - q^j\right) = 
(q^n-1)(q^n-q)(q^n-q^2)\cdots(q^n - q^{k-2})(q^n-q^{k-1}).
\]
\end{lemma}

\begin{proof}
Since first vector in the $k$-tuple may be any vector except for the zero
vector, there are $q^n-1$ choices for the first vector.  More generally, for $1
\leq m \leq k$, the
$m$-th vector in the $k$-tuple may be any vector that is not a linear
combination of the previously chosen $m-1$ vectors. So there are $q^n -
q^{m-1}$ choices for the $m$-th vector.
\end{proof}

\begin{lemma}\label{NumkSub}
  The number of $k$-dimensional subspaces of $\Fqn{n}$ is
\[
  \qbin{n}{k} = \prod_{j=0}^{k-1} \frac{q^n-q^j}{q^k-q^j} \, .
\]
\end{lemma}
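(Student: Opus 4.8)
The plan is to count the number of $k$-dimensional subspaces of $\Fqn{n}$ by a standard double-counting argument that relates it to the count from Lemma~\ref{NumkTup}. First I would count the number of ordered bases for a fixed $k$-dimensional subspace $W \leq \Fqn{n}$: since $W \cong \Fqn{k}$, applying Lemma~\ref{NumkTup} with $n$ replaced by $k$ shows there are $\prod_{j=0}^{k-1}(q^k - q^j)$ such ordered bases. On the other hand, every ordered linearly independent $k$-tuple of vectors in $\Fqn{n}$ spans a unique $k$-dimensional subspace, so the total count $\prod_{j=0}^{k-1}(q^n - q^j)$ from Lemma~\ref{NumkTup} equals $\qbin{n}{k}$ times the number of ordered bases of a single $k$-dimensional subspace.

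Solving for the binomial coefficient then gives
\[
  \qbin{n}{k} = \frac{\prod_{j=0}^{k-1}(q^n - q^j)}{\prod_{j=0}^{k-1}(q^k - q^j)} = \prod_{j=0}^{k-1} \frac{q^n - q^j}{q^k - q^j} \, ,
\]
which is exactly the claimed formula. The one point requiring a line of justification is that the number of ordered bases is the same for every $k$-dimensional subspace; this follows because any two such subspaces are related by a linear automorphism of $\Fqn{n}$, which carries ordered bases bijectively to ordered bases, so the fibers of the "span" map all have equal size.

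**The main obstacle** is essentially bookkeeping rather than mathematics: one must be careful that the count in Lemma~\ref{NumkTup} applied internally to $W$ genuinely counts ordered bases of $W$ (i.e. that linear independence in $W$ is the same as linear independence in $\Fqn{n}$, which is immediate), and that the "span" map from ordered independent $k$-tuples to $k$-dimensional subspaces is well-defined and surjective with constant fiber size. Once these points are in place, the formula drops out by division, so no genuinely hard step remains.
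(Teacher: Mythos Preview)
Your proposal is correct and is essentially the same double-counting argument the paper gives: count ordered linearly independent $k$-tuples in $\Fqn{n}$ two ways, once directly via Lemma~\ref{NumkTup} and once by first choosing a $k$-dimensional subspace and then an ordered basis of it, and divide. Your extra remark about the fibers of the ``span'' map having constant size (via transitivity of $\mathrm{GL}_n(\Fqn{})$ on $k$-dimensional subspaces) is a nice justification the paper leaves implicit, but otherwise the arguments coincide.
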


\begin{proof}
  Let $\left[\begin{smallmatrix}n\\k\end{smallmatrix}\right]_q$ denote the number of $k$-dimensional subspaces of $\Fqn{n}$
  and $N(q,k)$ be the number of ordered, linear independent $k$-tuples of
  vectors in $\Fqn{n}$.  Then Lemma~\ref{NumkTup} gives
  \begin{equation}
    \label{Nqk1}
    N(q,k) = \prod_{j=0}^{k-1} q^n-q^j \, .
  \end{equation}
  We may also find $N(q,k)$ another way: First choose a $k$-dimensional
  subspace and then choose the independent vectors in our $k$-tuple from the
  chosen subspace.  There are
  $\left[\begin{smallmatrix}n\\k\end{smallmatrix}\right]_q$ $k$-dimensional
  subspaces of $\Fqn{n}$.  There are $q^k-1$ choices for the first vector in
  the $k$-tuple, and more generally, for $1 \leq m \leq k$, there are $q^k - q^{m-1}$ vectors for
  the $m$-th vector in the $k$-tuple.  Thus
  \begin{equation}\label{Nqk2}
    N(q,k) = \qbin{n}{k}\prod_{j=0}^{k-1} q^k - q^j \, .
  \end{equation}
  Equations~\eqref{Nqk1}~and~\eqref{Nqk2} give the desired result.
\end{proof}

The number $\left[\begin{smallmatrix}n\\k\end{smallmatrix}\right]_q$ defined
above is known as the $q$-binomial
coefficient~\cite{stanley2011enumerative}.  
Lemmas~\ref{NumkTup}~and~\ref{NumkSub} combine to count the number of matrices
of a given rank. 

\begin{lemma}\label{Num_mbyn_rankr}
The number of $m\times n$ matrices of rank $r$ with entries in $\Fq$ is given by
\begin{equation*}
      \prod_{j=0}^{r-1} \frac{(q^m-q^j) (q^n - q^j)}{q^r - q^j} \, .
\end{equation*}
\end{lemma}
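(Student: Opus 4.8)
The plan is to count $m \times n$ matrices of rank $r$ by factoring each such matrix through its image. A rank-$r$ matrix $A \colon \Fqn{n} \to \Fqn{m}$ has an $r$-dimensional image $W \subseteq \Fqn{m}$, so I would organize the count as: first choose the $r$-dimensional subspace $W$ that will serve as the column space, then count the surjective linear maps $\Fqn{n} \twoheadrightarrow W$. By Lemma~\ref{NumkSub} there are $\qbin{m}{r}$ choices for $W$. Given $W$, a surjection $\Fqn{n} \to W$ is determined by the images of the $n$ standard basis vectors of $\Fqn{n}$, which form an ordered $n$-tuple of vectors in $W \cong \Fqn{r}$ that must span $W$.

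So the key sub-count is: the number of ordered $n$-tuples of vectors in $\Fqn{r}$ that span $\Fqn{r}$. I would obtain this by choosing an ordered linearly independent $r$-tuple first (there are $\prod_{j=0}^{r-1}(q^r - q^j)$ of these by Lemma~\ref{NumkTup}) and then noting that the remaining $n - r$ vectors can be arbitrary elements of $\Fqn{r}$ — but this overcounts, since the spanning set need not have its independent vectors in the first $r$ slots. The cleaner route is to instead count directly: an ordered $n$-tuple spanning $\Fqn{r}$ is the same as a surjective linear map $\Fqn{n} \to \Fqn{r}$, and I can count these as (number of all linear maps $\Fqn{n} \to \Fqn{r}$ whose image has dimension exactly $r$). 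Alternatively, and most efficiently, I would count pairs $(A, \text{basis of }\im A)$ in two ways, or simply split $A$ through a fixed isomorphism of its image: the number of surjections $\Fqn{n} \to \Fqn{r}$ equals $\prod_{j=0}^{r-1}(q^n - q^j)$, by the same argument as Lemma~\ref{NumkTup} applied to the rows — a linear map $\Fqn{n} \to \Fqn{r}$ is surjective iff its $r \times n$ matrix has rank $r$ iff its rows are linearly independent in $\Fqn{n}$, and there are $\prod_{j=0}^{r-1}(q^n - q^j)$ such row-tuples.

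Combining: the number of rank-$r$ matrices is
\[
  \qbin{m}{r} \prod_{j=0}^{r-1}(q^n - q^j)
  = \left( \prod_{j=0}^{r-1} \frac{q^m - q^j}{q^r - q^j} \right) \prod_{j=0}^{r-1}(q^n - q^j)
  = \prod_{j=0}^{r-1} \frac{(q^m - q^j)(q^n - q^j)}{q^r - q^j} \, ,
\]
which is the claimed formula. The main obstacle is purely bookkeeping: making sure the "choose the image, then choose a surjection onto it" decomposition is genuinely a bijection (every rank-$r$ matrix arises exactly once, since its image is uniquely determined and the surjection onto that image is just the map itself), and correctly identifying surjections $\Fqn{n} \to \Fqn{r}$ with rank-$r$ matrices whose \emph{rows} are independent, so that Lemma~\ref{NumkTup} applies with parameters $(n, r)$ rather than $(m,r)$. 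Once that identification is made, the formula falls out by multiplying the two counts and recognizing the telescoped product.
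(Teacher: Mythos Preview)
Your argument is correct and matches the paper's own proof essentially line for line: choose the column space $W\subseteq\Fqn{m}$ (giving the factor $\qbin{m}{r}$ via Lemma~\ref{NumkSub}), then count full-rank $r\times n$ matrices onto $W$ by noting their rows form an ordered independent $r$-tuple in $\Fqn{n}$ (giving $\prod_{j=0}^{r-1}(q^n-q^j)$ via Lemma~\ref{NumkTup}). The only difference is expository---you explore and discard the ``spanning $n$-tuple'' viewpoint before landing on the row-independence characterization, whereas the paper goes there directly.
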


\begin{proof}
  Let $W$ be a fixed $r$-dimensional subspace of $\Fqn{n}$.  The number of
  matrices whose column space is $W$ is given by the number of $r\times n$
  matrices with rank $r$.  This number is given by Lemma \ref{NumkTup}. The
  number of $r$-dimensional subspaces of $\Fqn{n}$ is
  $\left[\begin{smallmatrix}m\\r\end{smallmatrix}\right]_q$,  as stated in
  Lemma \ref{NumkSub}.  The product of these is the number of $m\times n$
  rank $r$ matrices.
\end{proof}


\begin{definition}\label{defPkmr}
Let $n_m$ be a sequence of natural numbers. Let $A_m$ be a sequence of random
$(n_m) \times (n_{m-1})$ matrices whose entries are chosen i.i.d. uniformly
from $\Fq$. Let $r$ be a non-negative integer.  Define 
\[
  P^m_k(r) := \mathbb{P} 
  \left[\rank(A_{m+1}) = r \st A_{m}A_{m+1} = 0, \nul(A_{m}) = k \right] \, .
\]
\end{definition}


\begin{lemma}\label{lemPkmr} With $A_m$ defined as in Definition \ref{defPkmr}, we have that
  \begin{equation}
    P^m_k(r) = 
    q^{-kn_{m+1}}\prod_{j=0}^{r-1} \frac{(q^{n_{m+1}}-q^j) (q^k - q^j)}{q^r - q^j} \, . 
	 \label{eqn:Pmkr}
       \end{equation}
\end{lemma}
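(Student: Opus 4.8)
The plan is to condition on the null space of $A_m$ and count directly. Fix a subspace $K \subseteq \Fqn{n_m}$ of dimension $k$; by symmetry the conditional distribution of $A_{m+1}$ given $\nul(A_m) = k$ does not depend on which $k$-dimensional subspace $K$ equals $\ker A_m$, so I may as well condition on the event $\ker A_m = K$ for a fixed $K$. The constraint $A_m A_{m+1} = 0$ is equivalent to $\im A_{m+1} \subseteq \ker A_m = K$, i.e. every column of the $n_m \times n_{m+1}$ matrix $A_{m+1}$ lies in $K$. Since the columns are otherwise unconstrained and (in the ambient model) i.i.d.\ uniform over $\Fqn{n_m}$, conditioning on $A_m A_{m+1} = 0$ simply makes the columns i.i.d.\ uniform over the $q^k$-element set $K$. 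Hence $A_{m+1}$, viewed through a choice of basis for $K \cong \Fqn{k}$, is a uniformly random $k \times n_{m+1}$ matrix over $\Fq$, and $\rank(A_{m+1})$ as a map into $\Fqn{n_m}$ equals its rank as a $k \times n_{m+1}$ matrix.

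From here the computation is mechanical: the total number of $k \times n_{m+1}$ matrices over $\Fq$ is $q^{k n_{m+1}}$, and by Lemma~\ref{Num_mbyn_rankr} the number of such matrices of rank $r$ is $\prod_{j=0}^{r-1} \frac{(q^k - q^j)(q^{n_{m+1}} - q^j)}{q^r - q^j}$. Dividing gives exactly
\[
  P^m_k(r) = q^{-k n_{m+1}} \prod_{j=0}^{r-1} \frac{(q^{n_{m+1}} - q^j)(q^k - q^j)}{q^r - q^j} \, ,
\]
which is Eq.~\eqref{eqn:Pmkr}. One should also note the degenerate cases are handled correctly: when $k = 0$ the only matrix is the zero matrix, the empty product gives $P^m_0(0) = 1$, and $P^m_0(r) = 0$ for $r > 0$ since $\nul(A_m) = 0$ forces $\im A_{m+1} = 0$; these agree with the formula.

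The only genuine subtlety — the step I would be most careful about — is the justification that conditioning on $A_m A_{m+1} = 0$ turns the columns of $A_{m+1}$ into i.i.d.\ uniform vectors on $K$. This rests on two facts that should be stated explicitly: first, that in the underlying probability model $A_m$ and $A_{m+1}$ are drawn with all entries i.i.d.\ uniform and the conditioning event is being imposed on this product measure (so that conditioning on $\{\ker A_m = K\} \cap \{A_m A_{m+1} = 0\}$ leaves $A_{m+1}$ with the uniform distribution on the linear subspace $\{B : \im B \subseteq K\}$ of matrices); and second, the symmetry argument that lets me replace a generic $k$-dimensional kernel by a fixed coordinate subspace, which uses that $\mathrm{GL}_{n_m}(\Fq)$ acts transitively on $k$-dimensional subspaces and that left-multiplying $A_{m+1}$ by an invertible matrix preserves rank. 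Once these are in place the rest is just Lemma~\ref{Num_mbyn_rankr} and arithmetic.
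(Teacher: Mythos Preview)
Your proof is correct and follows essentially the same approach as the paper: reduce the conditioned $A_{m+1}$ to a uniformly random $k \times n_{m+1}$ matrix via a choice of basis for $\ker A_m$, then apply Lemma~\ref{Num_mbyn_rankr} and divide by $q^{kn_{m+1}}$. Your version is in fact more careful than the paper's, which simply asserts the change-of-basis step without spelling out the symmetry and conditioning arguments you highlight.
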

\begin{proof}
  Let $ k = \nul(A_m)$.
The linear transformation $A_{m+1}$ maps 
$\Fq^{n_{m+1}}$ into a $k$-dimensional subspace of
$\Fq^{n_{m}}$. By changing basis, $A_{m+1}$ can be represented by an
$k \times n_{m+1}$ matrix. There are $q^{kn_{m+1}}$ total $k \times n_{m+1}$
matrices over $\Fqn{}$, and by
Lemma \ref{Num_mbyn_rankr}, there are 
\[
  \prod_{j=0}^{r-1} \frac{(q^{n_{m+1}}-q^j) ( q^k - q^j)}{q^r-q^j}
\]
such matrices of rank $r$.
\end{proof}

\begin{remark}
  We adopt the convention that the empty product is 1. With this,
  $P^m_0(0) = 1$ and $P^m_k(r) = 0$ for impossible cases like $r>k$ and $k < 0$.
\end{remark}

\begin{lemma}
  \label{lem:Pqtoinfty}
  Fix $m$ and $k$. Then 
  \[
    \lim_{q \ra \infty} P^m_k(r) = 
      \begin{cases}
        1 & \mbox{ if } r = \min(k,n_{m+1}) \, , \\
        0 & \mbox { else.}
      \end{cases}
    \]
\end{lemma}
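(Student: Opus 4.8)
The plan is to analyze the explicit formula \eqref{eqn:Pmkr} for $P^m_k(r)$ term by term as $q \to \infty$, tracking only the dominant power of $q$. Write $n = n_{m+1}$ for brevity and set $s = \min(k, n)$. First I would rewrite the product in \eqref{eqn:Pmkr} by factoring the largest power of $q$ out of each of the three binomial-type factors: for $0 \le j \le r-1$ we have $q^{n} - q^{j} = q^{n}(1 - q^{j-n})$, $q^{k} - q^{j} = q^{k}(1 - q^{j-k})$, and $q^{r} - q^{j} = q^{r}(1 - q^{j-r})$, where each parenthetical factor tends to $1$ as $q \to \infty$ since $j < r \le \min(k,n)$ guarantees the exponents $j-n$, $j-k$, $j-r$ are all strictly negative. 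Collecting exponents, the leading power of $q$ in $P^m_k(r)$ is
\[
  -kn + \sum_{j=0}^{r-1}(n + k - r) = -kn + r(n+k-r) = -(k-r)(n-r) \, .
\]
Hence $P^m_k(r) = q^{-(k-r)(n-r)}\bigl(1 + o(1)\bigr)$ as $q \to \infty$, where the $o(1)$ error comes from the finitely many $(1 - q^{\text{neg}})$ factors.

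Next I would examine the sign of the exponent $(k-r)(n-r)$ over the admissible range $0 \le r \le s = \min(k,n)$. For $r = s$ one of the factors $k-r$ or $n-r$ vanishes, so the exponent is $0$ and $P^m_k(s) \to 1$. For $0 \le r < s$ both $k - r > 0$ and $n - r > 0$, so $(k-r)(n-r) \ge 1$ and $q^{-(k-r)(n-r)} \to 0$, giving $P^m_k(r) \to 0$. For $r > s$ (or $r < 0$), $P^m_k(r) = 0$ identically by the convention in the remark following Lemma \ref{lemPkmr}, so the limit is trivially $0$. This establishes the claimed dichotomy, and as a sanity check the values sum to $1$ in the limit, consistent with $\sum_{r} P^m_k(r) = 1$ for every $q$.

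The only mild subtlety — not really an obstacle — is bounding the product of the $(1 - q^{j-n})$, $(1 - q^{k-j})^{-1}$-type correction factors uniformly: since $m$ and $k$ are fixed, the number of factors is bounded (at most $r \le \min(k, n_{m+1})$ of them), each converges to $1$, and a finite product of convergent sequences converges to the product of the limits, which is $1$. So no uniformity in $r$ is needed beyond the fact that $r$ ranges over a fixed finite set. The argument is thus a short computation once the leading-exponent bookkeeping is set up; I expect the write-up to be only a few lines.
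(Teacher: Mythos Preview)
Your argument is correct. You extract the leading power of $q$ from the formula for $P^m_k(r)$, obtaining $P^m_k(r) = q^{-(k-r)(n_{m+1}-r)}(1+o(1))$, and then read off the limit from the sign of the exponent. The exponent algebra checks out, and your handling of the finitely many correction factors is fine since $r$ ranges over a fixed finite set once $m$ and $k$ are fixed. One small imprecision: for $r > n_{m+1}$ (when $n_{m+1} < k$) the vanishing of $P^m_k(r)$ is not literally covered by the convention you cite, but it follows directly from the formula since the factor $q^{n_{m+1}} - q^{n_{m+1}}$ appears in the numerator; the conclusion is unaffected.

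The paper takes a slightly different and shorter route: it computes $P^m_k(r)$ explicitly only at $r = \min(k,n_{m+1})$, simplifies to a finite product of terms of the form $1 - q^{\text{negative}}$, and concludes that this single value tends to $1$. The remaining cases are then dispatched in one line by the observation that the $P^m_k(r)$ are probabilities summing to $1$, so if one of them tends to $1$ the others must tend to $0$. Your approach trades this complementarity trick for a direct asymptotic analysis at every $r$; the payoff is that you obtain the precise decay rate $q^{-(k-r)(n_{m+1}-r)}$ for the subcritical terms, which is extra information the paper's argument does not yield.
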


\begin{proof}
  Suppose $\min(k,n_{m+1}) = k$. Then 
  \begin{align*} 
    P^m_k(k) =& q^{-kn_{m+1}}\prod_{j=0}^{k-1} \frac{(q^{n_m+1}-q^j) (q^k - q^j)}{q^k - q^j}  \\
    =& q^{-kn_{m+1}} \prod_{j=0}^{k-1}(q^{n_{m+1}} - q^j) \\
    =& \prod_{j=0}^{k-1} (1-q^{j-n_{m+1}}).
  \end{align*}
  Suppose $\min(k,n_{m+1}) = n_{m+1}$. Then
  \begin{align*}
    P^m_k(n_{m+1}) 
    =& q^{-kn_{m+1}}\prod_{j=0}^{n_{m+1}-1} \frac{(q^{n_{m+1}} - q^j)(q^k - q^j)}{q^{n_{m+1}} - q^j}    \\
    =&  \prod_{j=0}^{n_{m+1}-1}(1-q^{j-k}).
	\end{align*}
    In either of the above cases, $P^m_k(r) \ra 1$ as $q \ra \infty$. On the other hand, 
    if $r \neq \min(k,n_{m+1})$, then $P^m_k(r) \ra 0$ since each $P^m_k(r)$ represents
    a probability by Definition~\ref{defPkmr}.
\end{proof}

\section{The Homology of a Random Chain Complex}\label{SecCondComp}



\begin{definition}
  Let $q$ be a prime number and $\{n_m\}$ be a sequence of non-negative integers indexed by $m \in \Z$. Let $\varphi$ be a probability distribution on $\Fqn{}$. Let $\{A_m\}$ be a sequence of $n_{m-1}\times n_{m}$ matrices whose entries are chosen i.i.d. according to $\varphi$, subject to the condition that $A_{m-1}A_m = 0$.  The triple $(\Fqn{n_m},A_m,\varphi)$ is then said to be a {\bf  model for a random chain complex over} the field $\Fqn{}$.
\end{definition}


\begin{definition}
  A {\bf uniform random chain complex} is a model for a random chain complex over $\Fqn{}$, $(\Fqn{n_m},A_m,\varphi)$, where $\varphi$ is the uniform distribution on $\Fqn{}$. In this case, we drop $\varphi$ from the notation and write $(\Fqn{n_m}, A_m)$.
  \label{defn:random_chain_cx}
\end{definition}

\begin{remark}
  \label{rem:bdd}
  We are interested in {\em bounded from below} chain complexes, so that $A_m = 0$ for all $m < 0$
  for the remainder of the manuscript.
\end{remark}


We wish to investigate the probabilistic properties of the homology of a uniform random
chain complex.  We are primarily interested in the distribution of the Betti
numbers $\beta_m = \nul \,A_m - \rank A_{m+1}$.

\begin{remark}
If $\{A_m\}$ is the sequence of maps from a uniform random chain complex, Definition
\ref{defPkmr} immediately gives us that
\[
  P^m_k(r) = \mathbb{P}\left[\beta_m = k-r \st \nul(A_{m}) = k\right].
\]
\end{remark}

\begin{theorem}\label{thmProbranks}
Let $(\Fq^{n_m}, A_m)$ be a uniform random chain complex 
and $A_0:\Fq^{n_0}\to 0$.  Then
\begin{align*}
 &  \mathbb{P}\left[\rank(A_{m}) = n_{m} - k\right]\\
=& 	\sum_{i_{m-1}=0}^{n_{m-1}} P_{i_{m-1}}^{m-1}\left(n_{m} -k\right)
	\sum_{i_{m-2}=0}^{n_{m-2}} P_{i_{m-2}}^{m-2}\left(n_{m-1} - i_{m-1}\right)
		\cdots
        \sum_{i_1 = 0}^{n_1} P_{i_1}^1\left(n_2 - i_2\right) P_{n_0}^0 \left(n_1 - i_1\right) \, .
\end{align*}
\end{theorem}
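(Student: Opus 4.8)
The plan is to condition on the nullities of the matrices lower down in the complex and unwind the recursion using the conditional probabilities $P^m_k(r)$. Since the complex is bounded below, the sequence terminates at $A_0 \colon \Fq^{n_0} \to 0$, which has $\nul(A_0) = n_0$ deterministically, and this gives the base case of the recursion. The key structural observation is that, once we condition on $\nul(A_m) = k$ (equivalently $\rank(A_m) = n_m - k$), the matrix $A_{m+1}$ is — after a change of basis on the target identifying $\ker A_m$ with $\Fq^k$ — a uniformly random $k \times n_{m+1}$ matrix, by the same argument used in the proof of Lemma \ref{lemPkmr}; in particular its rank distribution depends on the past only through $k$. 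This is a Markov-type property for the sequence of nullities, and it is what makes the telescoping work.

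First I would set up notation: write $\rho_m := \rank(A_m)$ and $\kappa_m := \nul(A_m) = n_m - \rho_m$. I would prove by downward induction on $\ell$ (from $\ell = 0$ up to $\ell = m$) the statement
\[
  \bP[\rho_m = n_m - k] = \sum_{i_{m-1}=0}^{n_{m-1}} P^{m-1}_{i_{m-1}}(n_m - k) \sum_{i_{m-2}=0}^{n_{m-2}} P^{m-2}_{i_{m-2}}(n_{m-1} - i_{m-1}) \cdots
\]
by repeatedly inserting $\bP[\cdot] = \sum_{i} \bP[\cdot \st \kappa_{\text{next}} = i]\,\bP[\kappa_{\text{next}} = i]$. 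Concretely, $\bP[\rho_m = n_m - k] = \sum_{i_{m-1}=0}^{n_{m-1}} \bP[\rho_m = n_m - k \st \kappa_{m-1} = i_{m-1}] \, \bP[\kappa_{m-1} = i_{m-1}]$. The first factor is exactly $P^{m-1}_{i_{m-1}}(n_m - k)$ by Definition \ref{defPkmr} (here $A_m$ plays the role of "$A_{m+1}$" and $A_{m-1}$ the role of "$A_m$", and $\rank A_m = n_m - k$). The second factor, $\bP[\kappa_{m-1} = i_{m-1}] = \bP[\rho_{m-1} = n_{m-1} - i_{m-1}]$, is an instance of the same quantity one level down, so the induction hypothesis applies. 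Iterating this down to level $1$ produces all the nested sums, and at the bottom we reach $\bP[\rho_1 = n_1 - i_1] = \bP[\rho_1 = n_1 - i_1 \st \kappa_0 = n_0] = P^0_{n_0}(n_1 - i_1)$, using that $\nul(A_0) = n_0$ with probability $1$ since the target of $A_0$ is the zero space. That final factor is precisely the $P^0_{n_0}(n_1 - i_1)$ term appearing in the statement.

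The main obstacle is justifying the Markov property rigorously, i.e. that $\bP[\rho_m = n_m - k \st \kappa_{m-1} = i_{m-1}, \kappa_{m-2} = i_{m-2}, \ldots]$ depends only on $i_{m-1}$, so that the conditioning collapses to $P^{m-1}_{i_{m-1}}(n_m - k)$ as defined. This follows because, conditioned on $\ker A_{m-1}$ (in particular on its dimension $i_{m-1}$), the matrix $A_m$ is constrained only by $\im A_m \subseteq \ker A_{m-1}$ and is otherwise uniform; choosing a basis adapted to $\ker A_{m-1}$ shows $A_m$ is a uniform $i_{m-1} \times n_m$ matrix independent of everything further down the complex. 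One should also check the edge/degenerate cases ($k > n_m$, or $n_m - k$ exceeding what is attainable) are handled by the convention that $P^m_k(r) = 0$ for impossible cases, so the formula remains valid with the stated summation ranges. Once the Markov step is in hand, the rest is a bookkeeping induction with no real content.
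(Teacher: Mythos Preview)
Your proposal is correct and follows essentially the same route as the paper: induction on $m$, using the law of total probability to condition on $\nul(A_{m-1})$, identifying the conditional factor as $P^{m-1}_{i_{m-1}}(n_m-k)$, and terminating with the deterministic base case $\nul(A_0)=n_0$. If anything, you are slightly more careful than the paper in explicitly isolating and justifying the Markov step (that the conditional distribution of $\rank A_m$ given the full history depends only on $\nul A_{m-1}$), which the paper leaves implicit in its use of Definition~\ref{defPkmr}.
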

\begin{proof}
The proof is by induction on $m$. For the base case $m=1$, we have
\begin{align*}
\mathbb{P}\left[\rank(A_1) = n_1 - k\right]
	=& \sum_{i_0=0}^{n_0}\mathbb{P}\left [\rank(A_1)= n_1 - k\st\nul (A_0) = i_0\right]
		\mathbb{P}\left[\nul(A_0) = i_0 \right]\\
	=& \mathbb{P}\left [\rank(A_1)= n_1 - k\st\nul (A_0) = n_0\right]\\
	=& P_{n_0}^0(n_1 - k).
\end{align*}
The first equality follows by the Law of Total Probability, and the second equality follows because $A_0$ is the zero map.

For the inductive step, suppose that
\begin{align*}
 &  \mathbb{P}\left[\rank(A_{m-1}) = n_{m-1} - i_{m-1}\right]\\
=& 	\sum_{i_{m-2}=0}^{n_{m-2}} P_{i_{m-2}}^{m-2}\left(n_{m-1} -i_{m-1}\right)
	\sum_{i_{m-3}=0}^{n_{m-3}} P_{i_{m-3}}^{m-3}\left(n_{m-2} - i_{m-2}\right)
		\cdots
	\sum_{i_1 = 0}^{n_1} P_{i_1}^1\left(n_2 - i_2\right) P_{n_0}^0 \left(n_1 - i_1\right).
\end{align*}
As in the base case, we have 
\begin{align*}
   & \mathbb{P}\left[\rank(A_{m}) = n_{m} - k\right]\\
  =& \sum_{i_{m-1}=0}^{n_{m-1}} \mathbb{P}\left[ \rank(A_{m}) = n_{m} - k  
  		\st \nul(A_{m-1}) = i_{m-1}\right] \mathbb{P}\left[\nul(A_{m-1}) = i_{m-1}\right]\\
  =& \sum_{i_{m-1}=0}^{n_{m-1}} P_{i_{m-1}}^{m-1}(n_{m} - k)
  		\mathbb{P}\left[\rank(A_{m-1}) = n_{m-1}- i_{m-1}\right].
\end{align*}
The desired result now follows by the induction hypothesis.
\end{proof}

Theorem~\ref{thm:bettinum} now follows from Theorem~\ref{thmProbranks} and 
Lemma~\ref{lem:Pqtoinfty} in a straightforward manner. We give an explicit proof
for completeness.


\begin{proof}[Proof of Theorem~\ref{thm:bettinum}]
  By the law of total probability, we have
  \begin{align*}
    \mathbb{P}[\beta_{m} = b] 
    &= \mathbb{P}[\rank(A_{m+1}) = \nul(A_{m}) - b]\\
    &= \sum_{k=0}^{n_{m}}\mathbb{P}[\rank(A_{m+1}) = k - b\st \nul(A_{m})=k] \mathbb{P}[\nul(A_{m}) = k]\\
    &= \sum_{k=0}^{n_{m}} P_{k}^{m}(k-b)\mathbb{P}[n_{m} - \rank(A_{m}) = k]\\
    &= \sum_{k=0}^{n_{m}} P_{k}^{m}(k-b)\mathbb{P}[\rank(A_{m}) = n_{m} - k] \, .
  \end{align*}
  By Theorem~\ref{thmProbranks},
  \begin{align*}
    & \sum_{k=0}^{n_{m}} P_{k}^{m}(k-b)\mathbb{P}[\rank(A_{m}) = n_{m} - k] \\
    =& \sum_{k=0}^{n_{m}} P_{k}^{m}(k-b)\sum_{i_{m-1}=0}^{n_{m-1}} P_{i_{m-1}}^{m-1}\left(n_{m} -k\right)
    \cdots
    \sum_{i_1 = 0}^{n_1} P_{i_1}^1\left(n_2 - i_2\right) P_{n_0}^0 \left(n_1 - i_1\right), 
  \end{align*}
  as desired.
\end{proof}

\section{Proof of Theorem B}
In this section, we analyze Theorem~\ref{thm:bettinum} under the 
limit $q \ra \infty$.

\begin{proposition}
  \label{prop:oneseq}
Let $I_m:= \{0,1,\ldots, n_m\}$ and let $I^{(j)}:= I_1\times\cdots \times I_j$.
Then for every natural number $j$, there exists exactly one $\mathbf{i}^\ast =
(i_1^\ast,\ldots, i_j^\ast)$ in $I^{(j)}$ such that 
\[
  P_{i_{j-1}^\ast}^{j-1}(n_j-i_j^\ast)\cdots
  P_{i_1^\ast}^1(n_2-i_2^\ast)P_{n_0}^0(n_1-i_1^\ast) \to 1 \, ,
\]
as $q\to\infty$. In particular, set $i_0^\ast = n_0$. Then for $\ell$ in
$\{1,2,\ldots, j\}$, we have $i_\ell^\ast = (n_\ell - i_{\ell - 1}^\ast)_+$.
\end{proposition}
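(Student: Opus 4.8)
The plan is to prove this by induction on $j$, using Lemma~\ref{lem:Pqtoinfty} as the engine that pins down which summands survive the limit. The key observation is that the product in the statement has a nested, triangular structure: the factor $P_{n_0}^0(n_1-i_1)$ depends only on $i_1$, the factor $P_{i_1}^1(n_2-i_2)$ depends on $i_1$ and $i_2$, and so on, so we can resolve the indices one at a time starting from the inside.

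First I would set up the base case $j=1$: the product is just $P_{n_0}^0(n_1-i_1)$, and by Lemma~\ref{lem:Pqtoinfty} this tends to $1$ precisely when $n_1 - i_1 = \min(n_0, n_1)$, i.e. when $i_1 = n_1 - \min(n_0,n_1) = (n_1-n_0)_+$, and tends to $0$ otherwise; this is exactly $i_1^\ast = (n_1 - i_0^\ast)_+$ with $i_0^\ast = n_0$. For the inductive step, assume the claim for $j-1$, so there is a unique $(i_1^\ast,\ldots,i_{j-1}^\ast)$ making the length-$(j-1)$ product tend to $1$, with $i_\ell^\ast = (n_\ell - i_{\ell-1}^\ast)_+$. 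The length-$j$ product is
\[
  P_{i_{j-1}}^{j-1}(n_j - i_j)\cdot\Big[P_{i_{j-2}}^{j-2}(n_{j-1}-i_{j-1})\cdots P_{n_0}^0(n_1-i_1)\Big].
\]
Since every factor is a probability, hence bounded by $1$, the product of the bracketed factors is at most $1$ and, by the inductive hypothesis, tends to $1$ only when $(i_1,\ldots,i_{j-1}) = (i_1^\ast,\ldots,i_{j-1}^\ast)$ and otherwise tends to $0$. Because $P_{i_{j-1}}^{j-1}(n_j-i_j) \le 1$ as well, the full length-$j$ product can tend to $1$ only if the bracket does, forcing $(i_1,\ldots,i_{j-1}) = (i_1^\ast,\ldots,i_{j-1}^\ast)$; and then, with $i_{j-1} = i_{j-1}^\ast$ fixed, Lemma~\ref{lem:Pqtoinfty} says $P_{i_{j-1}^\ast}^{j-1}(n_j-i_j)\to 1$ iff $n_j - i_j = \min(i_{j-1}^\ast, n_j)$, i.e. iff $i_j = (n_j - i_{j-1}^\ast)_+ =: i_j^\ast$, and $\to 0$ for any other value of $i_j$. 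This establishes both existence and uniqueness of $\mathbf{i}^\ast$ at level $j$, together with the recursion $i_j^\ast = (n_j - i_{j-1}^\ast)_+$.

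One point that needs a small amount of care rather than being purely formal: to conclude that a product tends to $0$ when one factor tends to $0$, I am using that all the factors are uniformly bounded (by $1$), which holds since each $P^m_k(r)$ is a probability; and to conclude the product tends to $1$ only in the asserted case, I need that the bracketed quantity is bounded above by $1$ for \emph{all} $q$, not just in the limit — again immediate from boundedness of probabilities. The genuinely substantive input is entirely contained in Lemma~\ref{lem:Pqtoinfty}; the only real work here is bookkeeping the nested index structure, and the main (mild) obstacle is being careful that ``for each fixed tail $(i_1^\ast,\dots,i_{j-1}^\ast)$, exactly one choice of $i_j$ works'' is applied with $i_{j-1}$ already frozen at its starred value, so that Lemma~\ref{lem:Pqtoinfty} can be invoked with a fixed $k = i_{j-1}^\ast$.
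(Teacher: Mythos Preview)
Your proposal is correct and follows essentially the same approach as the paper: induction on $j$, with Lemma~\ref{lem:Pqtoinfty} determining the unique surviving index at each step via $i_\ell^\ast = (n_\ell - i_{\ell-1}^\ast)_+$. If anything, you are slightly more explicit than the paper about the uniqueness direction, spelling out why boundedness by $1$ of each probability factor forces the full product to fail to tend to $1$ whenever the inner bracket or the leading factor does not.
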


\begin{proof}
The proof is by induction on $j$.

Base step ($j=1$). By Lemma~\ref{lem:Pqtoinfty}, we have $P_{n_0}^0 (n_1 -
i_1^\ast) \to 1$ as $q\to\infty$ if and only if $n_1 - i_1^\ast = \min(n_0,
n_1)$.  That is, $i_1^\ast = (n_1 - n_o)_+ = (n_1 - i_0^\ast)_+$.

Inductive step. Assume there exists exactly one $(i_1^\ast,\ldots ,
i_{j-1}^\ast)$ in $I^{(j-1)}$, with $i_\ell = (n_\ell - i_{\ell-1}^\ast)_+$ for
$\ell$ in $\{1,2,\ldots, j-1\}$, such that 
\[
P_{i_{j-2}^\ast}^{j-2}(n_{j-1} - i_{j-1}^\ast) 
\cdots P_{i_1^\ast}^1 (n_2 - i_2^\ast) P_{n_0}^0 (n_1 - i_1^\ast) \to 1 \, ,
\]
as $q\to\infty$.  By Lemma~\ref{lem:Pqtoinfty}, $P_{i_{j-1}^\ast}^{j-1}(n_j -
i_j^\ast) \to 1$ as $q \to \infty$ if and only if $n_j - i_j^\ast =
\min(i_{j-1}^\ast, n_j)$. That is, $i_j^\ast = (n_j - i_{j-1}^\ast)_+$.  
For $\mathbf{i} = (i_1^\ast,\ldots, i_{j-1}^\ast, i_j^\ast)$ in $I^{(j)}$, we
have
\[
P_{i_{j-1}^\ast}^{j-1}(n_j-i_j^\ast)P_{i_{j-2}^\ast}^{j-2}(n_{j-1}-i_{j-1}^\ast)\cdots P_{i_1^\ast}^1(n_2-i_2^\ast)P_{n_0}^0(n_1-i_1^\ast) \to 1
\]
as $q\to\infty$, as desired.
\end{proof}

\begin{proof}[Proof of Theorem~\ref{thm:qtoinfty}]
  By Theorem~\ref{thm:bettinum}, it is sufficient to show
  \[
    P^m_{i_m^\ast}(i_m^\ast-b) P_{i^\ast_{m-1}}^{m-1}\left(n_{m} -i_m^\ast\right) \cdots
    P_{i_1^\ast}^1\left(n_2 - i_2^\ast\right) P_{n_0}^0 \left(n_1 - i_1^\ast\right) \to 1 
  \]
  as $q \ra \infty$ for a single sequence $\mathbf{i}^\ast=(i_0^\ast, \ldots,
  i_m^\ast)$ and a single
  value of $b$. After choosing $\mathbf{i}^\ast$ as in Proposition~\ref{prop:oneseq},
  the value of $b$ is easily determined from Lemma~\ref{lem:Pqtoinfty} to be
	\begin{align*}
	b 	&= i_m^\ast - \min(i_m^\ast, n_{m+1})\\
		&= (-n_{m+1} + i_m^\ast)_+\\
		&= (-n_{m+1} + (n_m -i_{m-1}^\ast)_+)_+\\
		&= (-n_{m+1} + (n_m - (n_{m-1} - (\cdots (n_1 - n_0)_+ \cdots)_+)_+)_+ )_+\\
        &= B_m \, . \qedhere
	\end{align*}
\end{proof}

Proposition~\ref{prop:oneseq} and Theorem B have a number of immediate consequences.

\begin{corollary}
  Let $(\Fqn{n_m},A_m)$ be a uniform random chain complex. Then
  \[
    \bP[\rank(A_m) = n_m-(n_m-(n_{m-1} -( \cdots -(n_1-n_0)_+ \cdots )_+ )_+)_+]
    \ra 1 \,
  \]
  as $q \ra \infty$.
\end{corollary}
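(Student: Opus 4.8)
The plan is to derive this corollary as an immediate consequence of Proposition~\ref{prop:oneseq} combined with Theorem~\ref{thmProbranks}, in exact parallel with how the proof of Theorem~\ref{thm:qtoinfty} extracted the limiting value of $b$. First I would recall that Theorem~\ref{thmProbranks} expresses $\bP[\rank(A_m) = n_m - k]$ as the iterated sum
\[
  \sum_{i_{m-1}=0}^{n_{m-1}} P_{i_{m-1}}^{m-1}(n_m - k) \sum_{i_{m-2}=0}^{n_{m-2}} P_{i_{m-2}}^{m-2}(n_{m-1} - i_{m-1}) \cdots \sum_{i_1=0}^{n_1} P_{i_1}^1(n_2 - i_2)\, P_{n_0}^0(n_1 - i_1) \, .
\]
Summing over all $k \in \{0,1,\ldots,n_m\}$ gives $1$, so the event $\{\rank(A_m) = n_m - k\}$ for varying $k$ partitions the probability space, and it suffices to identify the unique $k$ for which the corresponding summand tends to $1$.

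Next I would observe that fixing $k$ and setting $i_m = k$ makes the inner nested sum over $(i_1,\ldots,i_{m-1})$ — together with the factor $P_{i_{m-1}}^{m-1}(n_m - i_m)$ — precisely the quantity analyzed in Proposition~\ref{prop:oneseq} with $j = m$. That proposition tells us there is exactly one tuple $\mathbf{i}^\ast = (i_1^\ast,\ldots,i_m^\ast)$ with $i_\ell^\ast = (n_\ell - i_{\ell-1}^\ast)_+$ (and $i_0^\ast = n_0$) for which the product of $P$'s converges to $1$; all other tuples contribute terms converging to $0$. Applying Lemma~\ref{lem:Pqtoinfty} once more to the outermost factor $P_{i_{m-1}^\ast}^{m-1}(n_m - i_m^\ast)$, this last factor tends to $1$ exactly when $n_m - i_m^\ast = \min(i_{m-1}^\ast, n_m)$, i.e. when $i_m^\ast = (n_m - i_{m-1}^\ast)_+$, which is consistent with the recursion in Proposition~\ref{prop:oneseq}. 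Hence the distinguished value of $k$ is $k = i_m^\ast = (n_m - i_{m-1}^\ast)_+ = (n_m - (n_{m-1} - (\cdots - (n_1 - n_0)_+ \cdots)_+)_+)_+$, and therefore $\rank(A_m) = n_m - k = n_m - (n_m - (n_{m-1} - (\cdots - (n_1 - n_0)_+ \cdots)_+)_+)_+$ with probability tending to $1$.

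I do not expect a genuine obstacle here; the work is entirely bookkeeping, ensuring the indices in Proposition~\ref{prop:oneseq} line up with those in Theorem~\ref{thmProbranks} when we specialize $k = i_m^\ast$. The one point requiring a little care is the interchange of the finite sum over $k$ with the limit $q \to \infty$: since each summand is a probability and there are only finitely many values of $k$ (namely $n_m + 1$ of them), the limit passes through the sum termwise, so the single surviving term forces $\bP[\rank(A_m) = n_m - i_m^\ast] \to 1$. This is the same justification already used implicitly in the proof of Theorem~\ref{thm:qtoinfty}, so I would simply cite that argument rather than repeat it.
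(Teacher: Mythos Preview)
Your proposal is correct and follows essentially the same approach the paper gestures at: the paper's proof simply says ``using Lemma~\ref{lem:Pqtoinfty}, this follows by a similar argument to the Proof of Theorem~\ref{thm:qtoinfty},'' and you have spelled out precisely that argument by invoking Theorem~\ref{thmProbranks} for the iterated-sum representation and Proposition~\ref{prop:oneseq} to isolate the unique surviving tuple. One minor redundancy: once you apply Proposition~\ref{prop:oneseq} with $j=m$, the value $i_m^\ast = (n_m - i_{m-1}^\ast)_+$ is already part of its conclusion, so your separate appeal to Lemma~\ref{lem:Pqtoinfty} for the outermost factor is not an additional step but just a restatement of what the proposition already gives.
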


\begin{proof}
  Using Lemma~\ref{lem:Pqtoinfty}, this follows by a similar argument to the
  Proof of Theorem~\ref{thm:qtoinfty}.
\end{proof}

\begin{corollary}
  \label{cor:inc}
  If $\{n_m\}$ is a monotone increasing sequence, then 
  \[
    \lim_{q \ra \infty} \bP[\beta_m = 0] = 1 \, .
  \]
\end{corollary}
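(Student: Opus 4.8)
The plan is to derive Corollary~\ref{cor:inc} as an immediate special case of Theorem~\ref{thm:qtoinfty}, by showing that the invariant $B_m$ of Eq.~\eqref{eqn:Nm} vanishes whenever $\{n_m\}$ is monotone increasing. Once $B_m = 0$, Theorem~\ref{thm:qtoinfty} gives $\bP[\beta_m = 0] \ra 1$ as $q \ra \infty$, which is exactly the claim.

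First I would analyze the nested positive-part expression
\[
  B_m = (-n_{m+1} + (n_m - (n_{m-1} - (\cdots - (n_1 - n_0)_+ \cdots)_+ )_+ )_+)_+ \, .
\]
Work from the inside out, setting $c_0 = n_0$ and $c_\ell = (n_\ell - c_{\ell-1})_+$ for $\ell = 1, \ldots, m$ (this matches the $i_\ell^\ast$ of Proposition~\ref{prop:oneseq}), so that $B_m = (c_m - n_{m+1})_+$. The key claim is the invariant $c_\ell \le n_\ell$ for every $\ell \ge 1$: indeed $c_\ell = (n_\ell - c_{\ell-1})_+ \le (n_\ell)_+ = n_\ell$ since each $n_\ell \ge 0$, with no hypothesis needed yet. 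A sharper claim, using monotonicity, is that $c_\ell \le n_{\ell+1}$ for every $\ell \ge 0$: for $\ell = 0$ this is $n_0 \le n_1$, which holds by the increasing hypothesis; for $\ell \ge 1$ we just showed $c_\ell \le n_\ell \le n_{\ell+1}$, again using monotonicity. Applying this with $\ell = m$ yields $c_m \le n_{m+1}$, hence $B_m = (c_m - n_{m+1})_+ = 0$.

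Then I would close the argument by invoking Theorem~\ref{thm:qtoinfty} directly: with $B_m = 0$ we conclude $\bP[\beta_m = B_m] = \bP[\beta_m = 0] \ra 1$ as $q \ra \infty$, which is the statement of the corollary.

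There is essentially no obstacle here; the only thing requiring a moment's care is bookkeeping with the alternating structure of the nested expression in Eq.~\eqref{eqn:Nm} — making sure one reads off the recursion $c_\ell = (n_\ell - c_{\ell-1})_+$ correctly and does not mis-handle the outermost term $-n_{m+1} + c_m$. The monotonicity hypothesis is used exactly twice: once to get $n_0 \le n_1$ (the base of the $c_\ell \le n_{\ell+1}$ chain) and once to pass from $c_m \le n_m$ to $c_m \le n_{m+1}$. One could also note that the same computation shows the conclusion holds under the weaker hypothesis $n_m \le n_{m+1}$ for the single index $m$ in question together with $n_0 \le n_1$, but the clean statement for monotone increasing sequences suffices.
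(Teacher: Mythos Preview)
Your proof is correct and follows the same route as the paper: both show that the inner nested positive-part expression is bounded above by $n_m$, then use monotonicity to conclude $B_m = 0$ and invoke Theorem~\ref{thm:qtoinfty}. Your version simply makes the recursion $c_\ell = (n_\ell - c_{\ell-1})_+$ explicit where the paper appeals to ``direct inspection.''
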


\begin{proof}
  By direct inspection, we have
  \[
    (n_m - (n_{m-1} - ( \cdots (n_1 - n_0)_+ \cdots )_+)_+)_+ \leq n_m \, ,
  \]
  and hence $B_m = 0$.
\end{proof}

\begin{corollary}
  The $t$-th moments of the random variable $\beta_m$ satisfy
  \[
    \lim_{q \ra \infty} \mathbb{E}\left[\beta_m^t \right] = B_m^t \, .
  \]
\end{corollary}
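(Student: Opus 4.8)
The plan is to leverage Theorem~\ref{thm:qtoinfty}, which already tells us that $\beta_m$ concentrates on the single value $B_m$ as $q \to \infty$, together with the fact that $\beta_m$ is a bounded random variable. First I would observe that $\beta_m = \nul A_m - \rank A_{m+1}$ always satisfies $0 \le \beta_m \le n_m$, since $\nul A_m \le n_m$ and $\rank A_{m+1} \ge 0$; in particular $\beta_m^t \le n_m^t$ deterministically, a bound independent of $q$. Thus for each fixed $t$ the random variables $\beta_m^t$ are uniformly bounded, which is exactly the hypothesis needed to pass from convergence in probability to convergence of expectations (bounded convergence / dominated convergence for the finitely-supported laws).

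The key steps, in order, are as follows. Write $p_q(b) = \bP[\beta_m = b]$ for the distribution computed in Theorem~\ref{thm:bettinum}, supported on $\{0,1,\dots,n_m\}$. Then
\[
  \mathbb{E}\left[\beta_m^t\right] = \sum_{b=0}^{n_m} b^t \, p_q(b)
  = B_m^t \, p_q(B_m) + \sum_{\substack{b=0 \\ b \ne B_m}}^{n_m} b^t \, p_q(b).
\]
By Theorem~\ref{thm:qtoinfty}, $p_q(B_m) \to 1$ as $q \to \infty$, so the first term tends to $B_m^t$. For the second term, each $b^t \le n_m^t$, so
\[
  0 \le \sum_{\substack{b=0 \\ b \ne B_m}}^{n_m} b^t \, p_q(b)
  \le n_m^t \sum_{\substack{b=0 \\ b \ne B_m}}^{n_m} p_q(b)
  = n_m^t \bigl(1 - p_q(B_m)\bigr) \to 0.
\]
Combining the two limits gives $\lim_{q\to\infty} \mathbb{E}[\beta_m^t] = B_m^t$, as claimed.

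There is essentially no serious obstacle here: the only thing to be careful about is that the sum is genuinely finite (a consequence of Remark~\ref{rem:bdd} and the fact that $\beta_m \le n_m$), so that the crude bound $b^t \le n_m^t$ applies uniformly and we may interchange limit and the finite sum without any further justification. If one preferred, the same argument can be phrased as: $\beta_m \to B_m$ in probability and $0 \le \beta_m \le n_m$ almost surely, hence $\beta_m^t \to B_m^t$ in probability and, being bounded by the constant $n_m^t$, also in $L^1$, which yields the convergence of the moments. Either way the proof is a one-line application of boundedness plus Theorem~\ref{thm:qtoinfty}.
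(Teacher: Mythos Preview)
Your argument is correct and is precisely the straightforward bounded-convergence reasoning that the paper leaves implicit: the paper states this corollary without proof, treating it as an immediate consequence of Theorem~\ref{thm:qtoinfty}, which is exactly what you have written out in detail.
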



\bibliography{master}

\begin{thebibliography}{10}

\bibitem{bollobas_2006_percolation}
Bela Bollob{\'a}s and Oliver Riordan.
\newblock {\em Percolation}.
\newblock Cambridge University Press, 2006.

\bibitem{bott2013differential}
Raoul Bott and Loring~W Tu.
\newblock {\em Differential forms in algebraic topology}, volume~82.
\newblock Springer Science \& Business Media, 2013.

\bibitem{broadbent_percolation_1957}
S.~R. Broadbent and J.~M. Hammersley.
\newblock Percolation processes: I. crystals and mazes.
\newblock {\em Mathematical Proceedings of the Cambridge Philosophical
  Society}, 53(3):629--641, 7 1957.

\bibitem{brown2012cohomology}
Kenneth~S Brown.
\newblock {\em Cohomology of groups}, volume~87.
\newblock Springer Science \& Business Media, 2012.

\bibitem{cartan2016homological}
Henry Cartan and Samuel Eilenberg.
\newblock {\em Homological Algebra (PMS-19)}, volume~19.
\newblock Princeton University Press, 2016.

\bibitem{erdos_random_1959}
Paul Erd{\"o}s and Alfred R{\'e}nyi.
\newblock On random graphs {I}.
\newblock {\em Publ. Math. Debrecen}, 6:290--297, 1959.

\bibitem{erdos_evolution_1960}
Paul Erd{\"o}s and Alfred R{\'e}nyi.
\newblock On the evolution of random graphs.
\newblock {\em Publ. Math. Inst. Hung. Acad. Sci}, 5(1):17--60, 1960.

\bibitem{ginzburg2017random}
Viktor~L Ginzburg and Dmitrii~V Pasechnik.
\newblock Random chain complexes.
\newblock {\em Arnold Mathematical Journal}, pages 1--8, 2017.

\bibitem{hartshorne2013algebraic}
Robin Hartshorne.
\newblock {\em Algebraic geometry}, volume~52.
\newblock Springer Science \& Business Media, 2013.

\bibitem{hatcher2002algebraic}
Allen Hatcher.
\newblock {\em Algebraic topology}.
\newblock Cambridge University Press, 2002.

\bibitem{hochschild1945cohomology}
Gerhard Hochschild.
\newblock On the cohomology groups of an associative algebra.
\newblock {\em Annals of Mathematics}, pages 58--67, 1945.

\bibitem{kesten_percolation_1982}
Harry Kesten.
\newblock {\em Percolation theory for mathematicians}, volume 423.
\newblock Springer.

\bibitem{linial_random_2017}
Nathan Linial and Yuval Peled.
\newblock Random simplicial complexes: Around the phase transition.
\newblock In Martin Loebl, Jaroslav Ne{\u{s}}etril, and Robin Thomas, editors,
  {\em A Journey Through Discrete Mathematics: A Tribute to Jir{\u{r}}{\'{i}}
  Matou{\u{s}}ek}, pages 543--570. Springer International Publishing, 2017.

\bibitem{stanley2011enumerative}
Richard~P. Stanley.
\newblock {\em Enumerative Combinatorics}, volume~1.
\newblock Cambridge University Press, 2011.

\bibitem{zabka2018random}
Matthew Zabka.
\newblock A random bockstein operator.
\newblock {\em Algebra and Discrete Mathematics}, 25(2):311--321, 2018.

\end{thebibliography}
\bibliographystyle{plain}

\end{document}